\documentclass[runningheads,envcountsame,a4paper]{llncs} 

\usepackage{amssymb,amsmath,mathtools}
\usepackage{wasysym}
\usepackage{graphicx}
\usepackage{epsfig}
\usepackage{latexsym}
\usepackage[english]{babel}
\usepackage[utf8]{inputenc}
\usepackage{csquotes}
\usepackage{algorithm} 
\usepackage{algpseudocode}
\usepackage{tikz}
\usetikzlibrary{arrows,shapes.geometric,positioning,calc,cd}
\usepackage[colorinlistoftodos]{todonotes}
\usepackage{changepage}
\usepackage{subfig}
\usepackage{soul}
\usepackage{hyperref}
\usepackage{todonotes}
\usepackage{enumerate}

\newtheorem{thm}{Theorem}
\newtheorem{prop}[thm]{Proposition}
\newtheorem{cor}[thm]{Corollary}
\newtheorem{lem}[thm]{Lemma}

\newtheorem{exa}[thm]{Example}

\newtheorem{rmk}[thm]{Remark}
\newtheorem{obs}[thm]{Observation}

\newsavebox{\qedB}

\sbox{\qedB}{\setlength{\unitlength}{1mm}
 \begin{picture}(4,4)(0,0)
  \thinlines
  {\put(0,0){\framebox(2.83,2.83){}}}%
  {\put(1.17,1.17){\framebox(2.83,2.83){}}}%
  {\put(0,0){\framebox(4,4){}}}%
  {\put(1.17,1.17){{\rule{1ex}{1ex} }}}%
 \end{picture}}
 
\newcommand{\QEDB}{\ifmmode\def\next{\tag"\usebox{\qedB}"}%
 \else\let\next=\relax
 {\unskip\nobreak\hfil\penalty50
 \hskip2em\hbox{}\nobreak\hfil\usebox{\qedB}
 \parfillskip=0pt \finalhyphendemerits=0\penalty-100\bigskip}\fi\next}

\newcommand{\Alphabet}{\hbox{\rm Alph}}

\newcommand{\fac}{\hbox{\rm Fac}}

\newcommand{\Class}{\hbox{\rm Class}}

\newcommand{\prim}{\hbox{\rm Prim}}

\newcommand{\N}{\mathbb N}

\newcommand{\bprop}{\begin{prop}}
\newcommand{\eprop}{\end{prop}}
\newcommand{\bcor}{\begin{cor}}
\newcommand{\ecor}{\end{cor}}
\newcommand{\blem}{\begin{lem}}
\newcommand{\elem}{\end{lem}}

\newcommand{\cl}{\mathrm{Class}}
\newcommand{\pri}{\mathrm{Prim}}
\newcommand{\pow}{\mathrm{Power}}

\newcommand{\sq}{\mathrm{Sq}}
\newcommand{\NN}{\mathbb{N}}
\newcommand{\qcl}{\mathbb{Q}\mathrm{Class}}

\newcommand{\QQ}{\mathbb{Q}}
\newcommand{\ZZ}{\mathbb{Z}}

\newcommand{\rp}{\mathrm{RP}}
\newcommand{\indep}{\mathrm{Indep}}
\renewcommand{\sc}{\mathrm{Sc}}

\title{A Tighter Upper Bound for the Number of Distinct Squares in Circular Words}
\toctitle{Square complexity}

\authorrunning{S. Li, Y. Song}
\tocauthor{Shuo Li, Yuan Song}
\author{Shuo Li\inst{1} \and Yuan Song\inst{2}}

\institute{Hangzhou International Innovation Institute of Beihang University, \\Sino-French Laboratory for Mathematics\\
\email{shuoli@buaa.edu.cn}
\and
LMIB-School of Mathematical Sciences, Beihang University, Beijing\\
\email{yuan.song@buaa.edu.cn}
 }

\begin{document}

\maketitle
\begin{abstract}
A \emph{square} is a word of the form $uu$, where $u$ is a nonempty finite word.
Given a finite word $w$ of length $n$, let $[w]$ denote the corresponding \emph{circular word}, i.e., the set of all cyclic rotations of $w$.
We study the number of distinct square factors of the elements of $[w]$.
Amit and Gawrychowski first showed that this number is upper bounded by $3.14n$ in~\cite{AmitG17}. In a recent article~\cite{CMRRWZ26}, Charalampopoulos et al. improved this upper bound to $1.8n$ and conjectured that the sharp upper bound is $1.5n$. In this note, we improve this upper bound to $\frac{5}{3}n$.
\end{abstract}

\section{Introduction}

Repetitions in words constitute a central topic in combinatorics on words, with numerous connections to algorithms on strings and to the structural theory of periodicity. Among the various kinds of repetitions, the simplest and most studied are \emph{squares}, i.e., factors of the form $uu$ with $u$ a nonempty word. The earliest result concerning squares in words is obtained by Thue~\cite{Thue1906}, who proved the existence of square-free words of all lengths in three letters. The research concerning counting distinct squares in finite words was initiated by Fraenkel and Simpson. Let $w$ be a word of length $n$ and let $\sq(w)$ be the maximum number of distinct squares in $w$. In \cite{FraenkelS98},  Fraenkel and Simpson proved that $\sq(w) \leq 2n$ and conjectured that $\sq(w) \leq n$. After a series of improvements of the upper bound of $\sq(w)$, see, for example, $\sq(w) \leq 2n-O(\log n)$ due to Ilie~\cite{Ilie}; $\sq(w) \leq \frac{95}{48}n$ due to Lam~\cite{lam}; $\sq(w) \leq \frac{11}{6}n$ due to Deza, Franek and Thierry~\cite{DezaFT15} and $\sq(w) \leq 1.5n$ due to Thierry~\cite{thie},  the square conjecture was confirmed by Brlek the first author in \cite{sum}. 

The square-counting problem has also been studied in the context of \emph{circular} words. Currie~\cite{Currie02} first showed the existence of square-free circular words in three letters for all but a finite number of length exceptions using a computer-aided proof. After that, Shur~\cite{Shur10} gave a computer-free proof for the existence of exponentially many ternary square-free words, in terms of length, for given length. Let $w$ be a word of length $n$ and let $\sq([w])$ denote the maximum number of distinct squares in the circular word $[w]$. The first linear upper bound for $\sq([w])$ was obtained by Amit and Gawrychowski~\cite{AmitG17}, who proved that $\sq([w]) \leq 3.14n$ and for infinitely many $n$, there are words $w$ of length $n$ such that $\sq([w]) \geq 1.25n$. As a direct consequence of~\cite{sum} and~\cite{power}, one has the number of distinct square factors of a circular word $w$ of length $n$ is not larger than the number of squares in $w^2$, which induces $\sq([w]) \leq 2n$. In a recent article`\cite{CMRRWZ26}, Charalampopoulos et al. improved this upper bound to $\sq([w]) \leq 1.8n$ and showed that, for infinitely many $n$, there are words $w$ of length $n$ such that $\sq([w]) \geq 1.5n$. Charalampopoulos et al. also conjectured that $\sq([w]) \leq 1.5n$ is the sharp upper bound for all $w$ of length $n$~\cite{CMRRWZ26}. The goal of the present work is to improve the upper bound of $\sq([w])$. The main result of this note states the following.

\medskip
\noindent\textbf{Main Theorem}
\emph{Let $w$ be a finite word of length $n$. The number $\sq([w])$ of distinct square factors of the circular word $[w]$ is at most $\frac{5}{3}n$.}
\medskip

The paper is organized as follows. In Section~\ref{sec2} we recall some basic notions on words and graphs. In Section~\ref{sec3} we recall the definition of Rauzy graphs and small circuits and prove more properties of small circuits. In Section~\ref{sec4} we give a proof of our main theorem.

\section{Preliminaries}\label{sec2}

In this paper, we take basic terminology and notations about words from \cite{sum} and Lothaire \cite{lothaire3}. A {\em word} is a finite sequence   $w = w_1 w_2 \cdots w_{n}$ of \emph{letters} or symbols. The {\em length} $|w|$ of $w$ is $n$ and $w_i$ is the letter in  {\em position} $i$. The {\em concatenation of $w= w_1 w_2 \ldots w_{n}$ and $v= v_1 v_2 \ldots v_{m}$} is defined as 
$wv=w_1 w_2 \ldots w_{n}v_1 v_2 \ldots v_{m}$. The {\em alphabet} of the word $w$ is defined as $A= \Alphabet(w)=\left\{w_i \mid 1\leq i \leq n\right\}$. A word $u$ is called a {\em factor} of $w$ if $w = pus$ for some words $p,s$. The $i$-th prefix ending at position $i$ is denoted $w_p(i) =w_1w_2 \ldots w_{i}$ and the  $i$-th suffix starting at position $i$ is  
$w_{s}(i)=w_iw_{i+1} \ldots w_{n}$. Hence for word $w=w_1w_2 \ldots w_n$ and any integer $1 \leq i \leq n$, $w= w_p(i-1) w_{s}(i)$. The set of all length-$i$ factors of $w$ is denoted by $\fac_i(w)$ and the set of all factors of $w$ is denoted by $\fac(w)$. For any positive integers $j,k$ such that $j+k\le |w|$, define $w[j,j+k]=w_{j}\ldots w_{j+k}$ as a factor of $w$. 


Two finite words $u$ and $v$ are {\em conjugate} if there exist two words $x,y$ such that $u=xy$ and $v=yx$.
The circular word $[w]$ is the conjugacy class of $w$. Thus, $[w]=\left\{v|v=w_s(i)w_p(i-1), i=1,2,\ldots, n\right\}$.

For any positive integer $k$,  the {\em $k$-power} of a nonempty finite word $u$ is the concatenation of $k$ copies of $u$, and it is denoted by $u^k$. In particular, a \emph{square} is a word $w$ of the form $w=uu$. For any word $u$ and any rational number $\alpha=\frac{m}{|u|} \geq 1$, the \emph{$\alpha$-power} of $u$ is defined to be $u^au_0$ where $u_0$ is a prefix of $u$, $a=\lfloor \alpha \rfloor$ is the integer part of $\alpha$, and $|u^au_0|=m$. The $\alpha$-power of $u$ is denoted by $u^{\alpha}$. 

For all integers $m\in\NN$, define 
$$[w]_m=\fac_m(w^{\lfloor \frac{m}{|w|} \rfloor+1}).$$
In particular, if $m \geq |w|$, then
$$[w]_m=\left\{p^{\frac{m}{|w|}}| p \in [w]\right\};$$
if $m < |w|$, then $[w]_m$ is the set of all length-$m$ factors of $w^2$. 

A word $w$ is said to be {\em primitive} if it is not an integer power greater than or equal to $2$ of a word distinct from $w$. The set of primitive factors of $w$ is denoted by $\pri(w)$.

Let $\sq(w)$ and $\sq([w])$ be respectively the number of distinct nonempty squares in $w$ and $[w]$. Let 
$$\pow(w) =\{q\in\fac(w)| q=p^k,k\in\NN,k>1\}$$ 
be the set of power factors of $w$, and let $$\pow([w]) =\bigcup_{u \in [w]}\pow(u)$$ 
be the set of power factors of the circular word $[w]$. 

\begin{thm}[{\rm Fine and Wilf~\cite{lothaire1}}]
\label{three}
If a finite word $w$ has two different period $m,n$ and $|w| \geq m+n-\gcd(m,n)$, then $\gcd(m,n)$ is also a period of $w$.
\end{thm}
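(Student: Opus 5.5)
The statement is the classical Fine--Wilf periodicity lemma. I will prove it by Euclid-style induction on $m+n$. Assume without loss of generality $m>n$, and set $d=\gcd(m,n)$.

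\emph{Base case.} If $n\mid m$, then $d=n$, which is already a period, so there is nothing to prove. In particular this covers $n=d$.

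\emph{Inductive step.} Otherwise I show that $m-n$ is a period of a suitable factor and then apply the induction hypothesis to the pair $(m-n,n)$.

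First I show directly that $w$ itself has period $m-n$. Take positions $i$ and $i+(m-n)$ both in $w$, and recall that $|w|\ge m+n-d$.
\begin{itemize}
\item If $i+m\le |w|$, then $w_i=w_{i+m}$ (period $m$) and $w_{i+m}=w_{i+m-n}$ (period $n$, read backwards).
\item Otherwise $i+m>|w|\ge m+n-d$, so $i>n-d\ge 0$. Then either $i>n$, in which case $w_i=w_{i-n}$ and $w_{i-n}=w_{i-n+m}$, or $i\le n$.
\end{itemize}
The case $i\le n$ is the delicate one, so I avoid it by working in a prefix instead.

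Let $u$ be the prefix of $w$ of length $|w|-n\ge m-d$. The word $u$ has period $m-n$, because for $i+m-n\le |u|$ we get $w_{i+m-n}=w_{i+m}$ (period $n$; this index stays in $w$ since $i+m\le|w|$) and $w_{i+m}=w_i$ (period $m$).

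The prefix $u$ also has period $n$. Its length satisfies
\[
|u|\ge m-d=(m-n)+n-\gcd(m-n,n),
\]
since $\gcd(m-n,n)=d$. So the induction hypothesis applies to $u$ with the periods $m-n$ and $n$, and $d$ is a period of $u$.

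Finally I lift the period $d$ from $u$ to all of $w$. Because $|u|\ge m-d\ge n$, the prefix $u$ contains a full block $w_1\cdots w_n$. This block has period $d$, and $d\mid n$. Since $w$ has period $n$, every position of $w$ agrees with the position in $w_1\cdots w_n$ congruent to it mod $n$. Therefore $w$ has period $d$.

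The only real obstacle is the index bookkeeping at the boundary. The choice of the prefix of length $|w|-n$ is made to circumvent it, together with checking that the inequality $|u|\ge (m-n)+n-d$ holds exactly. Edge cases such as $m-n<n$ just swap the roles of the two periods in the induction.
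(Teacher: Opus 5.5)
Your proof is correct. The paper gives no proof of this statement: it quotes the classical Fine--Wilf theorem with a reference to Lothaire. Your subtractive-Euclid induction on $m+n$ is therefore a self-contained replacement for that citation rather than a competing route; the paper gains only brevity by citing. Its key step is sound: pass to the prefix $u$ of length $|w|-n$, which has periods $m-n$ and $n$ and length at least $(m-n)+n-\gcd(m-n,n)$, then lift the period $d$ back to $w$ using $d\mid n$.

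A few points should be tightened in a write-up.
\begin{itemize}
\item The bulleted paragraph announcing that you will show directly that $w$ has period $m-n$ is never completed and is not used. Delete it, since the prefix argument replaces it.
\item In the inductive step, say explicitly that $m-n\neq n$ (otherwise $m=2n$ and $n\mid m$). This is what guarantees the hypothesis of two \emph{different} periods for the new pair. Also note that $(m-n)+n=m<m+n$, so the induction hypothesis applies.
\item Justify $m-d\ge n$. It is equivalent to $m-n\ge d$, which holds because $d$ divides the positive integer $m-n$.
\item State the lifting step precisely. The block $w_1\cdots w_n$ has period $d$ and $d\mid n$, so $w_1\cdots w_n=x^{n/d}$ with $|x|=d$. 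Since $w$ has period $n$, it is a prefix of $x^{\omega}$ and hence has period $d$.
\end{itemize}
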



Next, we recall some basic definitions and properties concerning graphs mainly from Berge~\cite{berge}.

Let $G=(V,E)$ be a directed graph such that $V$ is the set of its vertices and $E$ is the set of its edges.
A {\em chain} of {\em length $k$} is a sequence of edges $e_1,e_2,\ldots,e_k$, such that for each $i$ satisfying $1 < i < k$, $e_i$ has one vertex in common with the preceding edge $e_{i-1}$ and another vertex in common with $e_{i+1}$. A {\em path} is a chain such that, for each $i$ satisfying $1 \leq i < k$, the terminal vertex of $e_i$ coincides with the initial vertex of $e_{i+1}$. A chain or a path is {\em closed} if it begins and ends at the same vertex. A {\em cycle} is a closed chain and a {\em circuit} is a closed path. Obviously, any circuit is a cycle, but a cycle may not be a circuit. 
A cycle  $C=(V,E)$ (or a circuit) is  {\em elementary} if each vertex on the chain occurs exactly twice: in this case, the  {\em length} of $C$ is $|V|=|E|$ .   

A graph $G$ is {\em weakly connected}, if for any pair of distinct vertices $(v_1,v_2)$, there exists a chain with edges $e_1,e_2,\ldots,e_k$ such that $v_1$ is an endpoint of $e_1$ and $v_2$ is an endpoint of $e_k$. For a weakly connected graph $G$ with $l$ edges and $s$ vertices,  the number $\chi(G)=l-s+1$ is  the {\em cyclomatic number} of $G$.

Now fix an order of the edges $E=\{e_1,\ldots,e_l\}$ and choose an orientation for each edge. Given a cycle $C$ in $G$, its \emph{vector-cycle} is the vector $\mu(C)=(c_1,\ldots,c_l)\in\mathbb{R}^l$ defined as follows:
for each $1\le i\le l$, let $r_i$ be the number of times $e_i$ occurs in $C$ with the chosen orientation, and let $s_i$ be the number of times it occurs against that orientation; then set $c_i=r_i-s_i$.
A family of cycles $C_1,\ldots,C_t$ is \emph{independent} if the vectors $\mu(C_1),\ldots,\mu(C_t)$ are linearly independent.
This notion does not depend on the initial choice of default orientations.

Let $\indep(G)$ be the maximum number of independent cycles in $G$. We call a set of $\indep(G)$ independent cycles in $G$ a \emph{basis} of $G$. 

A graph $M=(V', E')$ is called a \emph{subgraph} of $G$ if $V' \subset V$ and $E' \subset E$. In this case, let us denote $M \sqsubset G$.

\begin{thm}[{\rm Th. 2, Chap. 4 in~\cite{berge}}]
\label{book}
The cyclomatic number of a graph is the maximum number of independent cycles in this graph.
\end{thm}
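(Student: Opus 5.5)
The statement is Berge's theorem: for a weakly connected directed graph $G$ with $l$ edges and $s$ vertices, $\indep(G)=\chi(G)=l-s+1$. I would prove it through the cycle space. Vector-cycles live in $\mathbb{R}^l$, and I would show that the subspace they span has dimension exactly $l-s+1$. Concretely, I would show that every vector-cycle lies in the kernel of the incidence map, that this kernel has dimension $l-s+1$, and that a spanning tree yields $l-s+1$ independent cycles.

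\textbf{Upper bound.} Define the incidence map $\partial:\mathbb{R}^l\to\mathbb{R}^s$ by sending the oriented edge $e_i=(a\to b)$ to $\delta_b-\delta_a$. Walk along a cycle $C$ and add the contribution of each traversal, counted positively with the orientation and negatively against it. The terms telescope because the chain is closed, so $\partial\mu(C)=0$.

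The image of $\partial$ lies in the hyperplane $\{x:\sum_v x_v=0\}$. Weak connectivity gives equality: for any two vertices $a,b$, the signed sum of the edges along a chain from $a$ to $b$ maps to $\delta_b-\delta_a$. Hence $\operatorname{rank}\partial=s-1$ and $\dim\ker\partial=l-s+1$. Since every $\mu(C)$ lies in this kernel, $\indep(G)\le l-s+1$.

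\textbf{Lower bound.} Choose a spanning tree $T$ of the underlying undirected graph; it exists by weak connectivity and has $s-1$ edges. For each of the $l-s+1$ edges $e\notin T$, let $C_e$ be the cycle formed by $e$ together with the unique chain in $T$ joining its endpoints. The vector $\mu(C_e)$ has coordinate $\pm1$ at $e$ and coordinate $0$ at every other non-tree edge. The family $\{\mu(C_e)\}$ is therefore triangular, in fact diagonal, on the non-tree coordinates, hence linearly independent. This gives $\indep(G)\ge l-s+1$.

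\textbf{Main obstacle.} The delicate step is the convention for $C_e$ when $e$ is a loop or a parallel edge. A loop gives the one-edge cycle $C_e=e$. For a parallel edge $e$, $C_e$ is $e$ together with the parallel tree edge. I would also note that independence of the orientation choice is harmless: reversing the orientation of an edge negates one coordinate in every vector, which does not change linear independence. Combining the two bounds gives $\indep(G)=\chi(G)$.
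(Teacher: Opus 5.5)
Your proof is correct. The paper gives no argument of its own to compare against: it simply imports Theorem~2 of Chapter~4 of Berge.

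Your route is the standard linear-algebraic one.
\begin{itemize}
\item The telescoping identity $\partial\mu(C)=0$ puts every vector-cycle in $\ker\partial$. This uses the reading of a closed chain as a closed walk $v_0,e_1,v_1,\dots,e_k,v_k=v_0$ in the underlying undirected multigraph, which is also the reading needed for the counts $r_i,s_i$ to be well defined.
\item $\ker\partial$ has dimension $l-s+1$, because weak connectivity puts every $\delta_b-\delta_a$ in the image.
\item The fundamental cycles of a spanning tree supply $l-s+1$ vectors that are diagonal on the non-tree coordinates.
\end{itemize}
As a by-product, the vector-cycles span exactly $\ker\partial$, which is slightly more than the statement asks.

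Berge's classical argument instead inserts the edges one at a time and tracks how the cyclomatic number and the family of independent cycles evolve. That version avoids incidence matrices and treats disconnected multigraphs (cyclomatic number $l-s+p$, with $p$ the number of components) directly. In your version one would replace $\operatorname{rank}\partial=s-1$ by $s-p$ and the spanning tree by a spanning forest.

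Your care with loops and antiparallel edges is appropriate for this paper, since its Rauzy graphs can contain both, e.g.\ the loop $aa$ and the pair $ab$, $ba$ in $\Gamma_1$. The generic fundamental-cycle construction already covers them, though. For a loop the tree chain is empty, and for an edge parallel to a tree edge it is that single tree edge.
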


\section{Rauzy Graphs and small circuits}\label{sec3}

Let us first recall the standard construction of Rauzy graphs introduced in~\cite{Rauzy}. Let $w$ be a word of length $n$. For any integer $i$ such that $1\leq i\leq n$, the Rauzy graph $\Gamma_i(w)$ is a directed graph whose set of vertices is $\fac_i(w)$ and the set of edges is $\fac_{i+1}(w)$.
An edge $e \in \fac_{i+1}(w)$ starts at the vertex $u$ and ends at the vertex $v$, if and only if $u$ is a prefix and $v$ is a suffix of $e$. It is "folklore" that the Rauzy graphs $\Gamma_i(w)$ are weakly connected for all $i$. A formal proof is given in \cite{aclss25}, and has already been mentioned in \cite{rote94}.

Let $\Gamma_i(w)$ be a Rauzy graph of $w$. An elementary circuit $C$ in $\Gamma_i(w)$ is called {\em small} if the length of $C$ is smaller than or equal to $i$.

For $1\le i\le n$, let $sc_i(w)$ be the number of small circuits in $\Gamma_i(w)$, and set
\[
sc(w)=\sum_{i=1}^n sc_i(w).
\]

We list and extend some useful propositions and notations from~\cite{sum}.

In~\cite{sum}, Brlek and the first author stated that, for any small circuit $C=(V,E)$ in $\Gamma_l(w)$, $1 \leq l \leq |w|$, there exists a primitive word $q$ such that $|q| \leq l$, $V=[q]_l$ and $E=[q]_{l+1}$. This fact can be extended to all circuits that are not necessarily small. Thus, for any circuit $C$ in $\Gamma_l(w)$, there exists a primitive word $q$ such that $C=([q]_l, [q]_{l+1})$, we denote it by $C(q,l)$.

\begin{lem}[{\rm Brlek and Li~\cite{sum}}]
\label{independent}
Let $w$ be a finite word. Then, 
\begin{itemize}
    \item[1] for all integers $1\leq i\leq |w|$,  the small circuits in $\Gamma_i(w)$ are independent. 
    \item[2] the total number of small circuits in $\Gamma(w)$ is upper bounded by $\indep(\Gamma(w))=|w|-|\Alphabet(w)|$.
\end{itemize}
\end{lem}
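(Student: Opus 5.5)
We have to prove the two items of Lemma~\ref{independent}: for fixed $i$, the small circuits of $\Gamma_i(w)$ are linearly independent as vector-cycles, and the total number of small circuits over all $i$ is at most $|w|-|\Alphabet(w)|$. Here $\Gamma(w)$ is read as the union (disjoint sum) of the Rauzy graphs $\Gamma_i(w)$, $1\le i\le |w|$.

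\textbf{Item 1.} The key tool is the description recalled above: each small circuit of $\Gamma_i(w)$ has the form $C(q,i)$ with $q$ primitive and $|q|\le i$, and its edge set is $[q]_{i+1}$. First we show that two distinct small circuits $C(q,i)$ and $C(q',i)$ share no edge. Suppose they share an edge $e$ of length $i+1$. Then $e$ has periods $|q|$ and $|q'|$, and $|e|=i+1\ge |q|+|q'|-\gcd(|q|,|q'|)$ need not hold. So instead we argue directly. Since $|q|\le i$, the edge $e$ determines every later letter of the circuit: each vertex of length $i\ge |q|$ determines its successor letter as the letter $|q|$ positions back. Hence the out-edge of each vertex within $C(q,i)$ is forced by the vertex itself.

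Concretely, let $v$ be the suffix of $e$. In $C(q,i)$ the next edge is $v a$, where $a$ is the letter at distance $|q|$ before the end of $va$. In $C(q',i)$ the next edge is $v a'$, with $a'$ the letter at distance $|q'|$ back. These letters may differ, so shared edges alone do not force equality. A better tool is this: an elementary circuit visits each vertex exactly once, so the $i$-factors of $q^\infty$ of its rotations are pairwise distinct, and its length is $|q|$.

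To get independence, order the small circuits by length and look for an edge of each circuit that lies in no shorter one. Suppose $C(q',i)$ uses only edges that also lie in circuits of smaller length. Then every $(i+1)$-factor of $q'^\infty$ is $|p|$-periodic for some $p$ with $|p|<|q'|$. Combining two such local periods with Fine–Wilf (Theorem~\ref{three}), using the fact that $i+1\ge |q'|+1$, would force $q'$ to be non-primitive. That contradiction gives a triangular system of vector-cycles, hence linear independence.

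\textbf{Item 2.} Since the $\Gamma_i(w)$ are edge-disjoint, a basis count is additive. By Theorem~\ref{book}, $\indep(\Gamma_i(w))=\chi(\Gamma_i(w))=|\fac_{i+1}(w)|-|\fac_i(w)|+1$. Summing over $i$, the terms telescope, but we must also account for components. It is cleaner to build one weakly connected graph $\Gamma(w)$ by identifying vertices along a tree, which adds no cycles. The cyclomatic number then telescopes to $|\fac_{n+1}|$-type boundary terms plus $\sum 1$. One must check that the count $\sum_i(|\fac_{i+1}(w)|-|\fac_i(w)|+1)$, restricted to the $i$ where small circuits can exist, equals $n-|\Alphabet(w)|$.

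The main obstacle is item 1, namely proving that no small circuit has all its edges covered by other small circuits. The Fine–Wilf length bookkeeping is delicate, because each edge only has length $i+1$, which may fall below $|p|+|q|$.
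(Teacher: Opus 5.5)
The paper gives no proof of this lemma; it only cites Brlek and Li, so your argument has to stand on its own. Your plan for item 1 is sound: order the small circuits of $\Gamma_i(w)$ by length and find in each $C(q',i)$ an edge lying in no earlier circuit, so the vector-cycles are triangular. The one step that carries the content is not proved. You claim that if every edge of $C(q',i)$ had a period $<|q'|$, then ``combining two such local periods with Fine--Wilf'' would make $q'$ non-primitive. This fails for two reasons. Fine--Wilf needs length at least $|p|+|q'|-\gcd(|p|,|q'|)$, but an edge has length only $i+1$, possibly $|q'|+1$. And consecutive edges may carry different short periods, so their overlaps of length $i$ cannot be glued either. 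The failure really occurs: in $\Gamma_5(w)$ with $w=(aba)^3(abaab)^2$, the small circuits $C(aba,5)$ and $C(abaab,5)$ share the edge $abaaba$, which has periods $3$ and $5$ but length $6<7$. So the edge-disjointness you first set out to prove is false. More importantly, no argument that draws a contradiction from an arbitrary edge having two periods can work; the exclusive edge must be chosen specifically. You flag this difficulty at the end but leave it open.

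The missing idea is to use an unbordered conjugate.
\begin{itemize}
\item Let $x$ be the Lyndon conjugate of $q'$, and let $e$ be the prefix of length $i+1$ of $x^{\omega}$; this is an edge of $C(q',i)$.
\item If $e$ had a period $p<|q'|$, so would its prefix $x$. That would give $x$ a nonempty border of length $|q'|-p$, which is impossible for a Lyndon word.
\item Every edge of $C(q,i)$ is a factor of $q^{\omega}$ and so has period $|q|$. Hence $e$ lies in no small circuit with $|q|<|q'|$.
\item For ties, two small circuits of the same length $L\le i$ that share an edge of length $i+1>L$ have conjugate generators (read off the length-$L$ prefix of that edge), so they coincide.
\end{itemize}
Ordering by length with arbitrary tie-breaking is therefore genuinely triangular, and item 1 follows. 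Item 2 then needs neither vertex identification nor a restriction on the range of $i$. Each $\Gamma_i(w)$ is already weakly connected, so by item 1 and Theorem~\ref{book} it carries at most $|\fac_{i+1}(w)|-|\fac_i(w)|+1$ small circuits. Summing over all $1\le i\le |w|$ telescopes to $|w|-|\Alphabet(w)|$, because $\fac_{|w|+1}(w)=\emptyset$.
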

For any word $w$ with a primitive factor $p$, define {\em the class of $p$} to be $$\Class_p(w)=\left\{q^{k}| q \in [p], k \in \NN^+, k \geq 2, q^{k} \in \fac(w) \right\}.$$
Two classes $\Class_p(w)$ and $\Class_q(w)$ are equal if and only if $p$ and $q$ are conjugate. Moreover, $$\pow(w)=\bigcup_{p\in\prim(w)}\Class_p(w).$$ 
Let $\Class(w)=\{\Class_p(w)| p \in \prim(w)\}$.
For every class  $\Class_p(w)$ of $w$, define
\begin{eqnarray*}
E_p(w)&=&\Class_p(w) \cap \{u^{2i}| i \in \mathbb{N}^+, u \in \prim(w)\};\\
O_p(w)&=&\Class_p(w) \cap \{u^{2i+1}| i \in \mathbb{N^+},  u \in \prim(w)\}.
\end{eqnarray*}
Obviously, $E_p(w) \cup O_p(w) = \Class_p(w)$ and\\ $$\sq(w)=\sum_{\Class_p(w) \in \Class(w)} |E_p(w)|.$$ 

\begin{lem}\label{odd}

Let $p$ be a primitive factor of $w$ of length $l$. If $\Class_p(w) \neq \emptyset$, then, for all integers $1 \leq i \leq |\Class_p(w)| $, there is a small circuit $C(p,i+l-1)$ in the graph $\Gamma_{i+l-1}(w)$. Moreover, one has
\begin{equation*}
|O_p(w)| \leq |E_p(w)| \leq |O_p(w)|+l.
\end{equation*}
\end {lem}

\begin{proof}
The first part of the statement is from Lemma 10 of~\cite{power}, while the second part is from Lemma 15 of~\cite{word23}. To see the second part of the lemma, let $|\Class_p(w)|=rl+s$ with integers $r,s$ such that $r \geq 0$ and $0 \leq s \leq l-1$. The class $\Class_p(w)$ contains all $\{q^i| q \in [p], 2 \leq i \leq r+1\}$ and $s$ elements in $\{q^{r+2}| q \in [p],\}$.  If $r$ is even, then $|O_p(w)|=\frac{r}{2}l$ while $|E_p(w)|=\frac{r}{2}l+s$; and if $r$ is odd, then $|O_p(w)|=\frac{r-1}{2}l+s$ while $|E_p(w)|=\frac{r+1}{2}l$. \qed
\end{proof}

\begin{cor}\label{inj}
If $p$ is a primitive factor of $w$ of length $l$ and if $|\Class_p(w)|=t$, then, 
$$|O_p(w)| \geq \frac{t-l}{2}.$$
\end{cor}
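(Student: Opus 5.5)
This follows directly from Lemma~\ref{odd}. Write $|E_p(w)|$ and $|O_p(w)|$ for the numbers of even and odd powers in the class. Every element of $\Class_p(w)$ is a power $q^k$ with $k\ge 2$. Such a $k$ is either even, so $q^k\in E_p(w)$, or odd with $k\geq 3$, so $q^k\in O_p(w)$. Hence the two sets partition the class and
\[
t = |\Class_p(w)| = |E_p(w)| + |O_p(w)|.
\]

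Next I invoke the second inequality of Lemma~\ref{odd}, namely $|E_p(w)| \le |O_p(w)| + l$. Substituting it into the identity gives
\[
t \le 2|O_p(w)| + l .
\]
Rearranging yields $|O_p(w)| \ge \frac{t-l}{2}$, which is the claim.

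The only point needing care is that $E_p(w)$ and $O_p(w)$ are disjoint and cover $\Class_p(w)$. Since $p$ is primitive, each element $q^k$ with $q\in[p]$ has a well-defined exponent $k$, because the primitive root of a word is unique. So no element can lie in both sets, and disjointness holds. If $\Class_p(w)=\emptyset$, then $t=0$ and the bound $|O_p(w)|\ge -l/2$ is trivial. I do not expect any real obstacle.
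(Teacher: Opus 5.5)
Your proposal is correct and is the same derivation the paper intends: the paper gives no separate proof, and the corollary follows from $t=|E_p(w)|+|O_p(w)|$ (which the paper calls obvious) combined with the inequality $|E_p(w)|\le |O_p(w)|+l$ of Lemma~\ref{odd}. Your checks that the two sets are disjoint (by uniqueness of the primitive root) and that the empty-class case is trivial are correct, and they add care the paper omits.
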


Now, let us define the {\em split} of a circuit. Let $w$ be a word, $p$ be a primitive factor of $w$ and $m \in \N$, if $C(p,l)=\{[p]_{l},[p]_{l+1}\}$ is an elementary circuit when $C(p,l) \sqsubset \Gamma_l(w)$ for all $l >n$ and the path $C(p,m)=\{[p]_{m},[p]_{m+1}\}$ is not an elementary circuit in $\Gamma_m(w)$, then we say $C(p,.)$ {\em splits} at $m$. 

\begin{exa}
    
Let $p=abac,\ w=p^{\omega}$. Then for $i\ge 4$, $|[p]_i|=4$. Moreover, one has
\[
        [p]_3=\{aba,bac,aca,cab\},
\]
\[
        [p]_2=\{ab,ba,ac,ca\},
\]
and
\[
        [p]_1=\{a,b,c\}.
\]

For $i\ge 2$, $C(p,i)$ is an elementary circuit since for each of them, the edge set and vertex set both have 4 elements. While $C(p,1)$ is not elementary since the vertex set of it is $[p]_1$, and $|[p]_1|=3$. Instead, it decomposes into the two elementary circuits
\[
        a\to b\to a
\]
and
\[
        a\to c\to a.
\]
Their lengths are both equal to $2$, and hence
\[
        2+2=4=|p|.
\]
Therefore, in this example, $C(p,.)$ splits at
$1$ into two elementary circuits, see Figure~\ref{fig:split}.

\end{exa}

\begin{figure}[htbp]
\centering
\begin{tikzpicture}[
    >=Stealth,
    scale=0.92,
    every node/.style={transform shape},
    vertex/.style={
        draw,
        circle,
        minimum size=7mm,
        inner sep=1pt,
        font=\small
    },
    edge/.style={->,thick},
    edge label/.style={
        font=\scriptsize,
        midway,
        fill=white,
        inner sep=1pt
    }
]

\node[font=\normalsize] at (0,2.4) {$\Gamma_1(p^\omega)$};

\node[vertex] (a1) at (0,1.2) {$a$};
\node[vertex] (b1) at (-1.1,0) {$b$};
\node[vertex] (c1) at (1.1,0) {$c$};

\draw[edge,bend left=18] (a1) to node[edge label,left] {$ab$} (b1);
\draw[edge,bend left=18] (b1) to node[edge label,right] {$ba$} (a1);

\draw[edge,bend left=18] (a1) to node[edge label,right] {$ac$} (c1);
\draw[edge,bend left=18] (c1) to node[edge label,left] {$ca$} (a1);

\node[font=\scriptsize,align=center] at (0,-0.9) {
$[p]_1=\{a,b,c\}$\\
$[p]_2=\{ab,ba,ac,ca\}$
};

\begin{scope}[xshift=4.4cm]
\node[font=\normalsize] at (0,2.4) {$\Gamma_2(p^\omega)$};

\node[vertex] (ab2) at (0,1.5) {$ab$};
\node[vertex] (ba2) at (1.2,0.7) {$ba$};
\node[vertex] (ac2) at (0,-0.1) {$ac$};
\node[vertex] (ca2) at (-1.2,0.7) {$ca$};

\draw[edge] (ab2) to node[edge label,above right] {$aba$} (ba2);
\draw[edge] (ba2) to node[edge label,below right] {$bac$} (ac2);
\draw[edge] (ac2) to node[edge label,below left] {$aca$} (ca2);
\draw[edge] (ca2) to node[edge label,above left] {$cab$} (ab2);

\node[font=\scriptsize,align=center] at (0,-0.95) {
$[p]_2=\{ab,ba,ac,ca\}$\\
$[p]_3=\{aba,bac,aca,cab\}$
};
\end{scope}

\begin{scope}[xshift=8.8cm]
\node[font=\normalsize] at (0,2.4) {$\Gamma_3(p^\omega)$};

\node[vertex] (aba3) at (0,1.5) {$aba$};
\node[vertex] (bac3) at (1.35,0.7) {$bac$};
\node[vertex] (aca3) at (0,-0.1) {$aca$};
\node[vertex] (cab3) at (-1.35,0.7) {$cab$};

\draw[edge] (aba3) to node[edge label,above right] {$abac$} (bac3);
\draw[edge] (bac3) to node[edge label,below right] {$baca$} (aca3);
\draw[edge] (aca3) to node[edge label,below left] {$acab$} (cab3);
\draw[edge] (cab3) to node[edge label,above left] {$caba$} (aba3);

\node[font=\scriptsize,align=center] at (0,-0.95) {
$[p]_3=\{aba,bac,aca,cab\}$\\
$[p]_4=\{abac,baca,acab,caba\}$
};
\end{scope}

\end{tikzpicture}
\caption{$C(p,.)$ splits at 1 for $p=abac$.}
\label{fig:split}
\end{figure}

Here are two simple observations:

\begin{obs}\label{obs1}
If $C(p,.)$ {\em splits} at $m$, then $C(p,m)=\{[p]_{m},[p]_{m+1}\}$ is a joint graph of some elementary circuits $C_1,C_2, \ldots, C_k$, such that $C_i$ and $C_j$ cannot have any common edge if $i \neq j$. Otherwise $C(p,m+1)=\{[p]_{m+1},[p]_{m+2}\}$ will have two common vertices, so that $C(p,m+1)$ is not an elementary circuit. Consequently, $\sum_{i=1}^k|C_i|=|p|$.
\end{obs}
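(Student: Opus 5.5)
The plan is to read $C(p,m)$ as the image in $\Gamma_m(w)$ of one closed walk of length $|p|$. I would show that this walk never repeats an edge, and then apply the standard decomposition of an edge-simple closed walk into edge-disjoint elementary circuits.

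\textbf{The closed walk.} Let $|p|=L$ and consider the periodic word $p^{\omega}$. For $j=0,1,\ldots,L-1$, let $v_j$ be the factor of length $m$ starting at position $j+1$ of $p^{\omega}$. Let $e_j$ be the factor of length $m+1$ starting at the same position. Indices are taken modulo $L$, which is legitimate because $p^\omega$ has period $L$.

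By construction, $e_j$ has prefix $v_j$ and suffix $v_{j+1}$. Hence $v_0 e_0 v_1 e_1 \cdots v_{L-1}e_{L-1}v_0$ is a closed path of length $L$ in $\Gamma_m(w)$. Its vertex set is $[p]_m$ and its edge set is $[p]_{m+1}$, so its underlying graph is exactly $C(p,m)$.

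\textbf{The edges are distinct.} By the definition of splitting, $C(p,m+1)$ is an elementary circuit, and it is traversed by the analogous closed path of length $L$ whose vertices are $e_0,\ldots,e_{L-1}$. Elementarity means these $L$ vertices are pairwise distinct. Therefore the closed walk in $\Gamma_m(w)$ uses each of its $L$ edges exactly once, and $|[p]_{m+1}|=L$. This is the only place where the hypothesis on $m+1$ enters.

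If two of the circuits in a decomposition of $C(p,m)$ shared an edge, that edge would be some $e_i=e_j$ with $i\ne j$. That would be a repeated vertex of $C(p,m+1)$, contradicting its elementarity. This is the reason stated in the observation.

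\textbf{Decomposition.} Since $C(p,.)$ splits at $m$, the walk in $\Gamma_m(w)$ is not elementary, so some vertex $v_i=v_j$ repeats. I would then peel off circuits by induction on the length of the walk:
\begin{itemize}
\item Take the first index $j$ such that $v_j=v_i$ for some earlier index $i<j$.
\item The segment $v_i e_i\cdots e_{j-1} v_j$ is then an elementary circuit $C_1$.
\item Deleting this segment leaves a shorter closed walk, which still has pairwise distinct edges.
\item Iterating produces elementary circuits $C_1,\ldots,C_k$ with pairwise disjoint edge sets whose union is $C(p,m)$.
\end{itemize}
Every edge $e_j$ lies in exactly one $C_i$. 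For an elementary circuit the length equals its number of edges, so $\sum_{i=1}^k |C_i| = L = |p|$.

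\textbf{Main obstacle.} The delicate step is the one in the second paragraph: identifying the vertices of $C(p,m+1)$ with the edges $e_j$ of $C(p,m)$ in the same cyclic order. Once that identification is in place, distinctness of the edges is immediate from elementarity at level $m+1$. Everything after that is the routine graph-theoretic decomposition.
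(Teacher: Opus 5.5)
Your proposal is correct and follows the paper's reasoning. The edges of $C(p,m)$ are exactly the vertices of $C(p,m+1)$, so elementarity at level $m+1$ forces the $|p|$ edges of the closed walk to be distinct, and peeling that walk yields edge-disjoint elementary circuits of total length $|p|$. You spell out details the paper leaves implicit; the one step that deserves an extra line is why the vertices $e_0,\ldots,e_{|p|-1}$ are pairwise distinct. That step uses primitivity of $p$: a walk wrapping around an elementary circuit of length $s<|p|$ would give $p^{\omega}$ period $s$, hence period $\gcd(s,|p|)<|p|$, which contradicts primitivity.
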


\begin{obs}\label{obs2}
If $C(p,m)$ is a small circuit, then it cannot {\em split} at $m-1$, since both of the sets $[p]_{m-1}$ and $[p]_{m}$ contain $|p|$ elements.
\end{obs}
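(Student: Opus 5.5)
The statement is Observation~\ref{obs2}: if $C(p,m)$ is a small circuit, then $C(p,\cdot)$ cannot split at $m-1$. The plan is a cardinality argument. A split at $m-1$ produces a non-elementary closed path, whereas the edge and vertex sets of $C(p,m-1)$ turn out to have the same size $|p|$, which forces elementarity.

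First I would extract the sizes from the smallness of $C(p,m)$. Since $C(p,m)$ is an elementary circuit in $\Gamma_m(w)$ of length at most $m$, its vertex set $[p]_m$ and its edge set $[p]_{m+1}$ both have exactly $|p|$ elements, and $|p|\le m$. By the description recalled from~\cite{sum}, $p$ is primitive and these sets consist of the length-$m$ (resp. length-$(m+1)$) factors read cyclically along $p^\omega$.

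Next I would compute $C(p,m-1)=([p]_{m-1},[p]_m)$. Its edge set is $[p]_m$, of size $|p|$. For the vertices, note that $m-1\ge |p|-1$. I would show that the $|p|$ length-$(m-1)$ factors starting at positions $0,\dots,|p|-1$ of $p^\omega$ are pairwise distinct. Suppose two of them coincided. Then the factor of length $m-1\ge|p|-1$ beginning at those positions would admit a shift $d$ with $0<d<|p|$. Combining this shift with period $|p|$, Fine and Wilf (Theorem~\ref{three}) would give a smaller period dividing $|p|$, contradicting the primitivity of $p$.

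This step is the main obstacle. The lengths are borderline: in general $m-1<|p|+d-\gcd(|p|,d)$, so Fine and Wilf does not apply directly. The repair I would try uses the Rauzy-graph structure instead. If two vertices of $[p]_{m-1}$ coincide, say $x=y$, then the prefix-map $[p]_m\to[p]_{m-1}$ is not injective. The closed path traced by $p^\omega$ in $\Gamma_{m-1}(w)$ would then revisit a vertex before completing a full turn. This is exactly a split, so the argument has to prove $|[p]_{m-1}|=|[p]_m|$ from the hypothesis that $C(p,m)$ is small, and cannot simply assume it.

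If that route fails, the cleanest fallback is to read Observation~\ref{obs2} as the authors' parenthetical justification. Namely, one takes the equality $|[p]_{m-1}|=|[p]_m|=|p|$ as the content of the hypothesis, either via the definition of $[q]_l$ for a small circuit at level $m-1$, or by applying the~\cite{sum} characterization at level $m-1$.

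With the counts in hand, the conclusion is immediate. A closed path that traverses $|p|$ distinct edges and visits $|p|$ distinct vertices, each vertex once, is by definition elementary. Hence $C(p,m-1)$ is an elementary circuit and does not split at $m-1$. This is consistent with Observation~\ref{obs1}, since a split would force $\sum|C_i|=|p|$ with the vertex sets of the $C_i$ overlapping, and so strictly fewer than $|p|$ distinct vertices.
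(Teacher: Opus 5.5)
The overall strategy is the same as the paper's. The paper justifies Observation~\ref{obs2} only by the count $|[p]_{m-1}|=|[p]_m|=|p|$, after which $C(p,m-1)$ is elementary, exactly as in your last paragraph. The gap is that you never establish $|[p]_{m-1}|=|p|$, which is the only nontrivial content; the paper asserts it without proof, so supplying it is what a proof has to do.

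\begin{itemize}
\item Your Fine--Wilf attempt is dropped because of a miscalculation.
\item Your ``Rauzy-graph repair'' only restates the goal: non-injectivity of the prefix map $[p]_m\to[p]_{m-1}$ is exactly a split.
\item The fallback is circular. Applying the characterization of~\cite{sum} at level $m-1$ presupposes that $C(p,m-1)$ is already an elementary circuit, which is the conclusion. Taking the count ``as the content of the hypothesis'' is not a proof.
\end{itemize}

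The Fine--Wilf route works if you apply it to the right word.

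\begin{itemize}
\item For $m-1\ge|p|$ there is nothing to do. A factor of $p^\omega$ of length $\ge|p|$ determines the conjugate of $p$ it starts with, and these conjugates are pairwise distinct by primitivity. So the only real case is $m=|p|$.
\item In general, suppose the length-$(m-1)$ factors of $p^\omega$ at positions $i$ and $i+d$, with $0<d<|p|$, coincide. Let $u$ be the factor of $p^\omega$ of length $m-1+d$ starting at $i$, i.e.\ the word spanning both occurrences.
\item The coincidence says precisely that $u$ has period $d$. Also $u$ has period $|p|$, being a factor of $p^\omega$.
\item Since $|u|=m-1+d\ge|p|+d-1\ge|p|+d-\gcd(|p|,d)$, Theorem~\ref{three} gives $u$ the period $g=\gcd(|p|,d)$, a proper divisor of $|p|$.
\item As $|u|\ge|p|$, the length-$|p|$ prefix of $u$ is a conjugate of $p$ with period $g$ dividing $|p|$. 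It is therefore a proper power, contradicting the primitivity of $p$.
\end{itemize}

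Your inequality $m-1<|p|+d-\gcd(|p|,d)$ measured the wrong word: the coinciding factor of length $m-1$, rather than the word of length $m-1+d$ that carries both periods. With this step filled in, your cardinality argument is complete.
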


\section{Proof of the main theorem}\label{sec4}

Let $w=w_1\ldots w_n$ be a word of length $n$ and let $W=w^2=W_1\ldots W_{2n}$. Let
\begin{eqnarray*}
&\sc'(W)=\{C(q,l)| \;|q|< \frac{n}{2}, q\in\prim(W), |q|\leq l\leq |\Class_q(W)|+|q|-1\};\\ 
&\sc(W)=\{C(q,l)| \;|q|< \frac{n}{2}, q\in\prim(W), |q|\leq l\}.    
\end{eqnarray*}
From the fact that $W$ is a square word, for any integer $1\leq m\le n+1$, $|\fac_m(W)|\leq n$. Moreover, since all factors of $W$ of length $m \leq n$ appears at least once at some position $i \leq n$ and the factor $W[1,m]=W[n+1,n+m]$ appear at least twice in $W$, the Rauzy graph $\Gamma_m(W)=([W]_m, [W]_{m+1})$ is a (not necessarily primitive) circuit. Moreover, one has the following lemma from~\cite{CMRRWZ26}:
\begin{lem}\label{multi}
If $w$ is non-primitive, then $\sq([w])\leq \frac{3n}{2}$.
\end{lem}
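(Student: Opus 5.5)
Write $w=v^k$ with $v$ primitive, $|v|=d$, $n=kd$ and $k\ge 2$. Every element of $[w]$ is a rotation of $w$, hence of the form $u^k$ with $u\in[v]$. So every square factor of $[w]$ is a factor of $v^{k+1}$ of length at most $n$, i.e. a factor of a word with period $d$. The plan is to split the squares $xx$ into two families, according to whether $|x|$ is a multiple of $d$, and count each family separately.

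\emph{Squares with $d\mid |x|$.} Suppose $|x|=jd$. Since $xx$ lies in a word of period $d$ and $|x|$ is a multiple of $d$, the word $xx$ is determined by its first $d$ letters, which form a conjugate of $v$. Hence $xx=q^{2j}$ for some $q\in[v]$. The constraint $|xx|=2jd\le n=kd$ gives $1\le j\le \lfloor k/2\rfloor$. Each $j$ contributes at most $d$ squares, so this family has at most $d\lfloor k/2\rfloor\le n/2$ members.

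\emph{Squares with $d\nmid |x|$.} These I would count through the periodic structure. Let $xx$ be a factor of a $d$-periodic word, and suppose first that $|xx|\ge |x|+d$, i.e. $|x|\ge d$. Then $xx$ has periods $|x|$ and $d$, and its length satisfies $2|x|\ge |x|+d$. By Theorem~\ref{three}, $xx$ has period $\gcd(|x|,d)<d$, so $v$ would have a proper period dividing $d$. This contradicts the primitivity of $v$. Therefore every such square has $|x|<d$, i.e. $|xx|<2d\le n$. Every such square is a factor of $v^{2}\cdots$, that is, a factor of the circular word $[v]$ of length $d$. By the $2d$-type bound for circular words, i.e. the fact recalled in the introduction that squares of $[v]$ are among the squares of $v^2$ and the bound of~\cite{sum}, there are at most $2d$ of them. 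I would refine this to $d$ using Lemma~\ref{independent}: each distinct square $xx$ with $|x|<d$ corresponds to a primitive root class giving small circuits in the Rauzy graphs of $v^2$, and the circuit count is bounded by $\indep(\Gamma(v^2))\le 2d$. The crude $2d$ should suffice when $k\ge 4$, since then $2d\le n/2$.

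\emph{Combining.} For $k\ge 4$ the total is at most $n/2+2d\le n\le 3n/2$. The cases $k=2$ and $k=3$ are the main obstacle. For $k=2$ the first family has $d=n/2$ members, so I need at most $n$ squares with $|x|<d$, i.e. at most $2d$, which is exactly the crude bound. This gives $n/2+n=3n/2$. For $k=3$ the first family again has $d=n/3$ members, and the second has at most $2d=2n/3$, for a total of $n$. So in fact the crude bounds close all cases. The step I would check most carefully is the claim that squares of $[v]$ with root shorter than $d$ number at most $2d$. This needs squares of length up to $2d-2$ to be factors of $v^2$, or of $v^3$ if necessary; invoking~\cite{sum} on $v^3$ would give only $3d$. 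If that happens, I would handle $k=2$ directly as $[w]$ with $w=v^2$: squares of length below $2d$ then appear in $w^2=v^4$, and I would use the $d$-periodicity to show they all occur inside $v^2$.
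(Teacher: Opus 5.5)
The paper gives no proof of this lemma; it quotes it from \cite{CMRRWZ26}. Your argument therefore has to stand on its own, and it has a genuine gap: the bound $2d$ for the second family is not justified.

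\textbf{Where the justification fails.} Squares in the second family have $|x|<d$, but their length can exceed $d$. So they are not factors of rotations of $v$; squares of the circular word $[v]$ have length at most $d$. They also need not be factors of $v^2$. Your Rauzy-graph refinement inside $\Gamma(v^2)$ has the same defect, and it would give $2d$ anyway, not $d$.

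\textbf{Your fallback for $k=2$ is false.} Take the primitive word $v=baababaa$ ($d=8$) and $w=v^2$. The square $(abaab)^2$, of length $10$, occurs in $v^3$ at position $8$, hence in a rotation of $w$. But it is not a factor of $v^2=w$.

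\textbf{Where the gap matters.} It is decisive only for $k=2$. For $k\ge 3$ the safe bound $3d$ already suffices: all second-family squares lie in $v^3$, and \cite{sum} applies. The totals are then at most $4d=\frac{4n}{3}$ for $k=3$ and $\frac{n}{2}+\frac{3n}{4}$ for $k\ge 4$. For $k=2$ the first family has $d$ members, so you need the second family bounded by exactly $2d$.

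\textbf{The missing observation.} You passed over it when dismissing $v^3$. Besides the second-family squares, $v^3$ also contains the $d$ distinct squares $q^2$ with $q\in[v]$. Their root length is $d$, so they lie outside the second family. Hence the second family has at most $\sq(v^3)-d\le 3d-d=2d$ elements, and your case analysis then closes.

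\textbf{A shorter route.} The split is unnecessary. You already observed that every square of $[w]$ is a factor of $v^{k+1}$, a word of length $n+d\le\frac{3n}{2}$. So \cite{sum} gives directly $\sq([w])\le\sq(v^{k+1})\le n+d\le\frac{3n}{2}$.
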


From now on, let us suppose that $w$ is primitive. In this case, $|\fac_n(W)|=|\fac_{n+1}(W)|=n$. Consequently, $\Gamma_n(W)$ is an elementary circuit. Moreover, there is no circuit in $\Gamma_l(W)$ for $l\geq n+1$.

Observing that if $u^2$ is a square of $[w]$, then there is a primitive word $v$ such that $u^2=v^{2i}$ for some positive integer $i$ and $|v|< \frac{n}{2}$, one has
$$\pow([w])\subseteq \pow^{'}(W):=\bigcup_{\substack{p\in\prim(w)\\|p|< \frac{n}{2}}}\Class_p(W).$$
From Lemma~\ref{odd}, there is a bijection from $\pow^{'}(W)$ to the set $\sc'(w)$. Moreover, one has
\begin{equation} \label{EQ1}
|\pow^{'}(W)| = |\sc'(W)| \leq |\sc(W)| \leq \indep(\Gamma(W)) \leq 2n.
\end{equation}
Consequently, one only needs to consider the small circuits of size smaller than $\frac{n}{2}$ in $\Gamma_1(W),\Gamma_2(W), \ldots, \Gamma_n(W)$.

\begin{lem}\label{Case1}
If for all $m\geq\frac{n}{2}$, all bases of $\Gamma_m(W)$ contain at least an elementary circuit $C$ of size $|C| \geq \frac{n}{2}$, then $\sq([w])\leq \frac{3n}{2}$.
\end{lem}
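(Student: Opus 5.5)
We would prove Lemma~\ref{Case1} by counting small circuits level by level in the Rauzy graphs of $W=w^2$ and splitting the levels at $\frac{n}{2}$. By the injection behind (\ref{EQ1}), it suffices to bound $|\sc'(W)|$, i.e.\ the number of small circuits $C(q,l)$ with $|q|<\frac n2$ that lie in some $\Gamma_l(W)$. We write $|\sc'(W)|=\sum_{l=1}^{n} s_l$, where $s_l$ counts such circuits in $\Gamma_l(W)$, and estimate the sums over $l<\frac n2$ and $l\geq \frac n2$ separately.

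\emph{Low levels.} For $l<\frac n2$ we use the crude bound via cyclomatic numbers. By Lemma~\ref{independent}, the small circuits in $\Gamma_l(W)$ are independent. Hence, by Theorem~\ref{book}, $s_l\le \chi(\Gamma_l(W))=|\fac_{l+1}(W)|-|\fac_l(W)|+1$. Summing over $1\le l<\lceil n/2\rceil$ telescopes to at most $|\fac_{\lceil n/2\rceil}(W)|-|\Alphabet(w)|+\lceil n/2\rceil-1$. Since $|\fac_m(W)|\le n$, this is at most $n+\frac n2-|\Alphabet(w)|$, roughly $\frac{3n}{2}$. This is too much on its own, so the levels $l\ge\frac n2$ must contribute little, or the two ranges must be combined more carefully.

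\emph{High levels.} For $m\ge \frac n2$, the hypothesis says that every basis of $\Gamma_m(W)$ contains an elementary circuit of length $\ge\frac n2$. The small circuits counted at level $m$ are independent and have length $|q|<\frac n2$. Our plan is to extend them to a basis of $\Gamma_m(W)$. That basis must contain a long circuit, so it has at least one element that is not counted, which gives $s_m\le \chi(\Gamma_m(W))-1=|\fac_{m+1}(W)|-|\fac_m(W)|$. Summing over $\frac n2\le m\le n$ telescopes to at most $|\fac_{n+1}(W)|-|\fac_{\lceil n/2\rceil}(W)|=n-|\fac_{\lceil n/2\rceil}(W)|$. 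Adding this to the low-level estimate, the terms $|\fac_{\lceil n/2\rceil}(W)|$ cancel, and we get
\[
|\sc'(W)|\le n+\lceil n/2\rceil-|\Alphabet(w)|\le \tfrac{3n}{2}
\]
up to the rounding at $l=\lceil n/2\rceil$, which $|\Alphabet(w)|\ge1$ absorbs. Finally, $\sq([w])\le|\pow([w])\cap\{\text{squares}\}|\le|\pow'(W)|=|\sc'(W)|$. For a sharper count one can instead use Lemma~\ref{odd} to keep only the even powers, but the bound above already suffices.

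\emph{Main obstacle.} The delicate step is the high-level count. The hypothesis speaks of \emph{all} bases, while we need that no basis consists only of small short circuits. The step is legitimate only if the family of counted small circuits, extended to a basis, must contain a long elementary cycle. The issue is that a basis of cycles may contain non-elementary or non-circuit cycles, so we would invoke the hypothesis exactly as stated: any basis extending the independent small circuits contains some long elementary circuit, which is therefore not one of the counted ones, giving the $-1$. If the exact statement of Theorem~\ref{book} makes the extension argument awkward, the fallback is to note directly that the union of the small circuits of length $<\frac n2$ is a subgraph whose cyclomatic number is smaller than $\chi(\Gamma_m(W))$. This holds because otherwise they would form a basis with no long circuit, contradicting the hypothesis. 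The other point needing care is the boundary level $l=\lceil n/2\rceil$ together with parity rounding, which we would handle explicitly.
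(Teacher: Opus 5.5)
Your proposal is correct and is essentially the paper's argument: the paper also bounds $|\sc'(W)|$ by the total cyclomatic budget $\indep(\Gamma(W))\le 2n$, minus one long circuit for each level $m\ge \frac{n}{2}$ that is independent of the small ones, and it reads the ``all bases'' hypothesis exactly as you do. Your level-by-level telescoping is a more explicit version of that global count, and it handles the boundary level $\lceil n/2\rceil$ and the rounding (which the paper glosses over) more carefully.
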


\begin{proof}
From the hypothesis, for all $m\geq\frac{n}{2}$, there is a circuit in $\Gamma_m(W)$ independent from the circuits in $\sc'(W)$. Thus,
$$|\sc'(W)|+\frac{n}{2} \leq |\indep(\Gamma(W))|\leq 2n.$$
Thus, from Equation~\ref{EQ1} 
$\sq([w]) \leq |\pow^{'}(W)| \leq |\sc'(W)| \leq \frac{3n}{2}$. \qed
\end{proof}

\begin{lem}\label{distinct}
If there exist a primitive word $p$ of length $l$ and a positive integer $k\geq 4$ such that $0<n-kl <l$ and $p^k \in \pow([w])$, 
then, for $n-l+1 \leq i \leq n$, there is a circuit of size larger than $\frac{n}{2}$ in all bases of $\Gamma_i(W)$.
\end{lem}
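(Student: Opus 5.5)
The plan is to show that for $n-l+1\le i\le n$ the graph $\Gamma_i(W)$ contains \emph{no} elementary circuit $C(q,i)$ with $|q|\le \frac n2$. Every elementary circuit is then of size larger than $\frac n2$. Since the cycle space is spanned by elementary circuits, this gives the claim about bases in the sense used in Lemma~\ref{Case1}: no basis can consist only of short circuits, so every basis contributes a circuit independent of $\sc'(W)$.

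\textbf{Step 1: normalize $w$.} Since $w$ is primitive and $p^k\in\pow([w])$ with $kl<n<(k+1)l$, some conjugate of $w$ has the form $p^k s$ with $|s|=r=n-kl$ and $0<r<l$. Replacing $w$ by this conjugate changes neither $[w]$ nor $\fac(W)$, so we may assume $W=p^kSp^kS\cdots$, where $W$ is a factor of $(p^ks)^2$. Let $R$ be the maximal run (maximal factor of period $l$) of $W$ containing the first occurrence of $p^k$. Since $w$ is primitive and $r<l$, the run cannot wrap around completely. I will check that $|R|<kl+l\le n+l-r$. The idea: if the run extended by $l$ letters, then $s$ together with the surrounding letters would force $w$ to have a period $l$ compatible with rotation by $n$, hence period $\gcd(l,n)$ by Theorem~\ref{three}, contradicting primitivity.

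\textbf{Step 2: rule out short circuits.} Suppose $C(q,i)$ is an elementary circuit in $\Gamma_i(W)$ with $q$ primitive and $|q|\le\frac n2$. As in Lemma~\ref{odd}, the existence of this circuit means that $u=q^{(i+|q|)/|q|}$, a word of length $i+|q|\ge n-l+1+|q|$ with period $|q|$, is a factor of $W$, and hence of the circular word $[w]$. Both $u$ and $p^k$ live in a circle of length $n$. Their lengths are $|u|\ge n-l+1+|q|$ and $kl=n-r>n-l$, so their overlap has length at least
\[
|u|+kl-n\ \ge\ kl-l+1+|q|+ (n-l-r)\cdot 0 .
\]
Roughly, this is $\ge (k-1)l+|q|-r$. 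With $k\ge4$ and $r<l$, this exceeds $l+|q|-\gcd(l,|q|)$.

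\textbf{Step 3: apply Fine and Wilf.} By Theorem~\ref{three}, the overlap has period $\gcd(l,|q|)$. Since $p$ and $q$ are primitive and the overlap contains a full $p$ and a full $q$, this forces $|q|=l$ and $q\in[p]$. Then $u$ is a factor with period $l$ of length $\ge i+l\ge n+1$ that overlaps $R$ in at least $l$ letters, so $u$ lies inside the run $R$. This gives $|R|\ge n+1$, contradicting Step~1. (Equivalently, $i+l\ge kl+r+1>kl$, so the class of $p$ is too short to produce $C(p,i)$.)

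\textbf{Main obstacle.} I expect the overlap counting on the circle to be the delicate point. One must make sure the overlap is a genuine common factor, not split by wrap-around, and that the hypothesis $k\ge4$ is exactly what makes the overlap bound reach $l+|q|$ for all $|q|\le n/2$. The worst case is $|q|$ near $\frac n2$, where $k\ge4$ gives $l<\frac n4$. This is the step I would verify first, by writing $u$ and $p^k$ as factors of $W$ with explicit starting positions in $[1,n]$.
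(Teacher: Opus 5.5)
\textbf{The statement you set out to prove is false.} You replace the lemma by a stronger claim: for $n-l+1\le i\le n$, the graph $\Gamma_i(W)$ contains no elementary circuit of length at most $\frac n2$. Take $w=(aab)^4a$, so $n=13$, $p=aab$, $l=3$, $k=4$, $r=1$; $w$ is primitive and all hypotheses hold. The word $W=w^2$ begins with $(aab)^4aa$, which has length $14=n+1$ and period $3$. Hence $(aab)^4$, $(aba)^4$ and $(baa)^4$ all occur in $W$, and $C(aab,11)$ is an elementary circuit of length $3$ in $\Gamma_{11}(W)$, with $11=n-l+1$. This agrees with Lemma~\ref{odd}: $|\Class_{aab}(W)|=9$, which gives $C(aab,m)$ for $3\le m\le 11$.

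Your argument breaks at the end of Step~3. The bound $|R|\ge n+1$ does not contradict Step~1, because $kl+l=n+l-r>n+1$ as soon as $r\le l-2$; here $|R|=14<15=kl+l$. The parenthetical fails for the same reason: $C(p,i)$ needs the fractional power of length $i+l$ to occur, not $p^{k+1}$.

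Step~1 is not right in general either. Theorem~\ref{three} with periods $l$ and $n$ only gives $|R|<n+l-\gcd(n,l)$. For $w=(aba)^4ab$, the word $W$ contains $(aba)^5$, of length exactly $kl+l$.

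Step~2 uses Lemma~\ref{odd} in the wrong direction. A small circuit $C(q,i)$ only guarantees that each of its $|q|$ edges occurs somewhere in $W$. It does not guarantee that they occur consecutively inside $q^{(i+|q|)/|q|}$: any word containing $aba$ and $bab$ but not $abab$ already has the circuit $ab\to ba\to ab$ in $\Gamma_2$.

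\textbf{What has to be proved instead.} Short circuits genuinely occur at these levels, for instance whenever the period-$l$ run through $p^k$ is longer than $n$. So the right target is that the elementary circuits of length at most $\frac n2$ do not span the cycle space of $\Gamma_i(W)$; that is all Lemma~\ref{Case1} uses. A sufficient criterion is that some edge of $\Gamma_i(W)$ lies on no short elementary circuit. The closed walk around the circle has positive coefficient on every edge, so it could not then lie in the span of the short circuits.

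The paper aims at this target by a different route. It normalizes $w'=p'^kp''$ with $p'[1]\neq p''[1]$ and looks at the single vertex $w'[1,i]$. It shows this vertex occurs only once on the circle, so exactly one elementary circuit passes through it, and bounds that circuit's length below by $f(i)>\frac n2$.

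This argument is only justified for $kl<i\le n$, because its step $w'[j,n]=w'[1,n-sl]$ holds only up to position $i$. In the example above, $w'=(baa)^4a$, and $w'[1,11]=baabaabaaba$ lies on the length-$3$ circuit. At $i=11$ the conclusion still holds, because the other elementary circuit, of length $10$, uses edges that lie on no short circuit.
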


\begin{proof}
We first affirm that there exists $w'$ a conjugate of $w$, $p'$ a conjugate of $p$ and $p''$ a word of length $r=n-kl\neq 0$ such that $w'=p'^kp''$ and $p'[1] \neq p''[1]$. Otherwise, letting $v=p^kq$ be a conjugate of $w$ and $V=v^3$, for all $lk<i\leq 2n$, $V[i-lk,i+n-lk]$ must be a conjugate of $w$, and thus $V[i]=V[i-lk]$. Hence,  $v^2=p^{\alpha}$ for some $\alpha>2$, which leads to a contradict to primitivities of both $p$ and $v$ using Theorem~\ref{three}.

 Now, for $n-l+1 \leq i \leq n$, $\Gamma_i(W)=\Gamma_i(w'w'[1,i])$, and it is a closed path. Thus, there is an elementary circuit passing through the vertex $w'[1,i]$ and let us denote it by $C$. Define the function
$$f(i)=\begin{cases}
    i-l,& \; \text{if} \;i \leq kl;\\
    (k-1)l,&\; \text{if} \;i > kl.\\
\end{cases}$$
From the hypothesis that $k \geq 4$, one can easily check that $f(i)> \frac{n}{2}$ for all $n-l+1 \leq i\leq n$. 

We affirm $|C|\geq f(i)$. In fact, since $w'[1,i]$ is a vertex of $C$, there is a primitive word $q$ such that $|q|=|C|$ and that $w'[1,i]$ is a suffix of $w'[1,i]q$. Thus, there is an integer $j$ such that $w'[1,i]=w'[j,i]q$. If $|C|<f(i)$, we will prove that $w'[1,l]=w'[j,j+l]=p'$ and $j+l \leq kl$ in two cases. If $i \leq kl$,  one has $i-j+1=i-|q|> l$ and hence $w'[1,l]=w'[j,j+l]=p'$. Moreover, $j+l=|q|+l-1\leq i\leq kl$. Similarly, if $i > kl$, one has $i-j+1=i-|q|> i-(k-1)l=l+(i-kl)$ and one also has $w'[1,l]=w'[j,j+l]=p'$. Moreover, $j+l=|q|+l-1\leq (k-1)l+l\leq kl$. In all cases, $w'[1,l]=w'[j,j+l]=p'$ and $j+l \leq kl$. Consequently, $j-1=sl$ for some positive integer $s\leq k-1$ and $|q|=sl$. Since $w'[j,n]=w'[1,n-sl]$, $w'[kl+1,n]=w'[(k-s)l+1,n-sl]$ is a prefix of $p$, which contradicts the hypothesis that $w'[1] \neq w'[kl+1]$.

Now, let us prove that there exists only one circuit $C$ passing through the vertex $w'[1,i]$ in $\Gamma_i(W)$. Otherwise, the vertex $w'[1,i]$ must occur more than twice in $W'=w'w'[1,i]$. Obviously, the first and the last occurrences of $w'[1,i]$ appear respectively at the positions $W'[1]$ and $W'[n+1]$. If there exists a third occurrence of $w'[1,i]$ at some position $W'[j]$, then both $j-1$ and $n+1-j$ must be larger than or equal to the size of an elementary circuit passing through $w'[1,i]$. But all of these circuits (if there is more than one such circuit) are of size $|C|> \frac{n}{2}$, therefore, $$n=n+1-j+j-1> 2\frac{n}{2}=n,$$ which leads to a contradiction.

In conclusion, the circuit $C$ must appear in all bases of $\Gamma_i(W)$ and $|C| >\frac{n}{2}$. \qed
\end{proof}

\begin{lem}\label{Case2}
If there exists an integer $m\geq\frac{n}{2}$ such that 
\begin{itemize}
    \item[1] for all integers $m+1\leq i \leq n$, all bases of $\Gamma_i(W)$ contain at least an elementary circuit $C$ of size $|C| \geq \frac{n}{2}$,
    \item[2]$\Gamma_m(W)$ contains a circuit $C=C(p,m)$ with some primitive word $p$ such that $|p| \leq \frac{n}{4}$,
    \item[3]$\Gamma_i(W)$ does not contain the elementary circuit $C=C(p,i)$ for all integers $m+1 \leq i \leq n$,
\end{itemize}
 then $\sq([w])\leq \frac{13n}{8}$.
\end{lem}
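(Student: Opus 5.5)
We will count, as in Lemma~\ref{Case1}, how many elements of $\sc'(W)$ can be matched with independent circuits, and subtract a loss term from the budget $\indep(\Gamma(W))\le 2n$. By hypothesis 1, each of the graphs $\Gamma_{m+1}(W),\ldots,\Gamma_n(W)$ has in every basis an elementary circuit of size at least $\frac n2$. Such a circuit is independent of the circuits of $\sc'(W)$ in the same graph, since the latter have size $<\frac n2$. This already gives $n-m$ extra independent circuits. The task is then to account for the levels $\lceil n/2\rceil\le i\le m$ differently. There we plan to use the circuit $C(p,m)$ given by hypotheses 2 and 3, and to show that the squares it produces are few while it forces many ``wasted'' independent cycles.

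First, hypothesis 3 says $C(p,\cdot)$ is not an elementary circuit above level $m$. Since $C(p,m)$ lives in $\Gamma_m(W)$ with $m\ge |p|$, this means $p^{\alpha}$ with $\alpha=(m+1)/|p|$ occurs in $W$, but $p^{(m+2)/|p|}$, in a form giving a circuit at level $m+1$, does not. Hence $|\Class_p(W)|\le m+1-|p|+1$, roughly. Since $w$ is primitive and $n\ge 2m\cdots$, the power $p^{k}$ with $k\approx m/|p|\ge 2$ sits inside a conjugate of $w$. We set $l=|p|\le \frac n4$ and write the maximal run of period $l$ in $[w]$ as $p^{k}p_0$ with $kl+|p_0|\approx m+l$.

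The key step is to produce, for each level $i$ in a range of length about $\frac n8$ below or around $m$, an additional circuit not in $\sc'(W)$. We do this by adapting Lemma~\ref{distinct}. The conjugate $w'=p'^{k}p''$ with $p'[1]\ne p''[1]$ exists by the same Fine--Wilf argument. For $i$ with $n-l+1\le i\le n$, or more generally for $i$ greater than the run length minus $l$, the circuit through the vertex $w'[1,i]$ has size at least $f(i)>\frac n2$ and appears in every basis. We then bound $|\Class_p(W)|\le m$ and split into two cases according to $m$ relative to $\frac{5n}{8}$ (or $kl$ relative to $\frac{3n}{4}$).
\begin{itemize}
\item[(a)] If $m$ is small, hypothesis 1 gives at least $n-m\ge \frac{3n}{8}$ extra circuits, so $|\sc'(W)|\le 2n-\frac{3n}{8}=\frac{13n}{8}$.
\item[(b)] If $m$ is large, then $p$ has a large exponent. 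By Lemma~\ref{odd} and Corollary~\ref{inj}, at least $\frac{|\Class_p(W)|-l}{2}\ge \frac{m-2l}{2}$ of the elements of $\Class_p(W)$ are odd powers, which are counted in $\sc'(W)$ but are not squares. Using $l\le\frac n4$, this deficit together with the $n-m$ extra circuits totals at least $\frac{3n}{8}$. Hence $\sq([w])\le|\sc'(W)|-|O_p(W)|\le 2n-(n-m)-\frac{m-2l}{2}$, and we optimize over $m$ and $l$.
\end{itemize}

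We expect the main obstacle to be the arithmetic in case (b). With only the crude bound $l\le \frac n4$, the estimate $2n-(n-m)-\frac{m-n/2}{2}=\frac{5n}{4}+\frac m2$ is too weak for $m$ near $n$. We will therefore have to combine it with Lemma~\ref{distinct}, valid when $k\ge4$, which $l\le n/4$ and $m\ge n/2$ nearly guarantee. That lemma gives $l$ further forced large circuits at levels $n-l+1,\ldots,n$, and we will need to check that these are genuinely extra and not already among the $n-m$ levels counted in hypothesis 1. Otherwise we would instead get extra circuits at levels just below $m$, through the vertex $w'[1,i]$ with $i\le m$, where the same size estimate $f(i)\ge i-l>\frac n2$ holds whenever $i>\frac n2+l$. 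That yields about $m-\frac n2-l$ further levels. Balancing these contributions against the odd-power deficit should give the constant $\frac{13}{8}$.
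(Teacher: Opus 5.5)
Your case (a) is fine, but case (b) has a genuine gap. The estimate $\frac{|\Class_p(W)|-l}{2}\ge\frac{m-2l}{2}$ needs the lower bound $t:=|\Class_p(W)|\ge m-l$, which you never establish. Hypothesis 3 gives only the upper bound $t\le m-l+1$: it says that some element of $[p]_{m+2}$ is not a factor, not that $p^{(m+2)/|p|}$ is missing. Hypothesis 2 only says that every element of $[p]_{m+1}$ occurs somewhere in $W$, possibly in different occurrences, so $t$ may fall short of $m-l+1$.

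The idea you are missing is that such a shortfall is paid for elsewhere. The circuits $C(p,i)$ with $|p|+t+1\le i\le m$ are small circuits lying in $\sc(W)\setminus\sc'(W)$. They therefore use up $m-t-|p|$ units of the budget $\indep(\Gamma(W))\le 2n$ without producing any power. With this term, the paper splits on $t$ rather than $m$. If $t\le n-2|p|$, then
\[
\sq([w])\le 2n-\frac{t-|p|}{2}-(m-t-|p|)-(n-m)=n+\frac{t+3|p|}{2}\le n+\frac{n+|p|}{2}\le\frac{13n}{8},
\]
and $m$ cancels out completely. If $t>n-2|p|$, then $t>(k-2)|p|$ with $k=\lfloor n/|p|\rfloor\ge 4$, so some conjugate $p'$ satisfies $p'^k\in\fac([w])$. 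Lemma~\ref{distinct} then yields $|p|$ graphs with a forced large circuit. One uses these \emph{instead of} the $n-m$ graphs of hypothesis 1, giving $\sq([w])\le 2n-\frac{t-|p|}{2}-|p|=2n-\frac{t+|p|}{2}\le 2n-\frac{n-|p|}{2}\le\frac{13n}{8}$.

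The patches you propose for large $m$ do not work. Adding the levels $n-l+1,\dots,n$ of Lemma~\ref{distinct} to the levels $m+1,\dots,n$ of hypothesis 1 double counts. Each graph contributes one forced large circuit, and both sets are terminal segments of $\{1,\dots,n\}$, so together they give only $\max(l,n-m)$.

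Your fallback, large circuits through $w'[1,i]$ for $i\le m$, is false. For $|p|\le i\le m$, every edge of $C(p,i)$ is a prefix of an edge of $C(p,m)$, so $C(p,i)$ is a circuit of $\Gamma_i(W)$. The vertex $w'[1,i]$ is a prefix of the period-$|p|$ run at the start of $w'$, so it is one of the vertices of $C(p,i)$. Hence a circuit of length $|p|\le n/4$ passes through $w'[1,i]$. Neither the bound $|C|\ge f(i)$ nor the uniqueness argument of Lemma~\ref{distinct} survives at these levels.

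Finally, Lemma~\ref{distinct} is stated for $k=\lfloor n/|p|\rfloor$, with $p'^k$ a factor of $[w]$. This is exactly what the condition $t>(k-2)|p|$ supplies. Your $k\approx m/|p|$ meets the hypothesis only when $m$ is close to $n$.
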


\begin{proof}
 First, let $|\Class_p(W)|=t$. Since the first small circuit of the form $C(p,.)$ appears in the graph $\Gamma_{|p|}(W)$,one has $t \leq m-|p|+1$.
 From Corollary~\ref{inj}, 
 $$|O_p(W)|\geq \frac{t-|p|}{2}.$$ 
 Second, since $$\{C(p,l)| \;|p|+t+1 \leq l\leq m\} \subset \sc(W)\setminus \sc'(W),$$
one has, 
$$|\sc(W)\setminus \sc'(W)|\geq m-|p|-t.$$
Third, let us count $NC_{large}$ the number of Rauzy graphs containing circuits $C$ such that $|C| \geq \frac{n}{2}$ in all bases. Let $k,r$ be integers such that $0 \leq r=n-k|p|<|p|$, and from the hypotheses, $k \geq 4$. We first affirm that $t \leq (k-1)l$. Otherwise, from the definition of $\Class_p(W)$, there is $p'$ a conjugate $p$ such that $p'^{k+1} \in \fac([w])$, which contradicts the fact $n-k|p|<|p|$. Let us count $NC_{large}$ in two cases.\\

\textbf{Case I} If $n-2|p|-r < t \leq (k-1)|p|$, then $(k-2)|p|=n-2|p|< t \leq (k-1)|p|$, and thus, there exists $p'$ a conjugate of $p$ such that $p'^k \in \fac([w])$. From Lemma~\ref{distinct}, $NC_{large}\geq |p|$. In this case, $n-|p| \geq \frac{3n}{4}$ and thus,
\begin{align*}
  \sq([w]) &\leq 2n-\frac{t-|p|}{2}-|p|\\
  &=2n-\frac{t+|p|}{2}\\
  &\leq 2n-\frac{n-|p|}{2}\\
  &\leq \frac{13n}{8}.
\end{align*}

\textbf{Case II} If $ t \leq n-2|p|$, then from the definition of $m$, $NC_{large}\geq n-m$. In this case, 
\begin{align*}
  \sq([w]) &\leq 2n-\frac{t-|p|}{2}-(m-t-|p|)-(n-m)\\
  &=n+\frac{t+3|p|}{2}\\
  &\leq n+\frac{n+|p|}{2}\\
  &\leq \frac{13n}{8}.
\end{align*}

 Thus, in all cases, $\sq([w]) \leq \frac{13n}{8}.$ \qed
\end{proof}

\begin{lem}\label{Case3}
If there exists an integer $m\geq\frac{n}{2}$ such that 
\begin{itemize}
    \item[1] for all integers $m+1\leq i \leq n$, all bases of $\Gamma_i(W)$ contain at least an elementary circuit $C$ of size $|C| \geq \frac{n}{2}$,
    \item[2]$\Gamma_m(W)$ contains a circuit $C=C(p,m)$ with some primitive word $p$ such that $\frac{n}{4}\leq |p| \leq \frac{n}{3}$,
    \item[3]$\Gamma_i(W)$ does not contain the circuit $C=C(p,i)$ for all integers $m+1 \leq i \leq n$
\end{itemize}
 then $\sq([w])\leq \frac{5n}{3}$.
\end{lem}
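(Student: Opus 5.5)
I would follow the same three-step accounting as in the proof of Lemma~\ref{Case2}, changing only the final estimates to use the range $\frac{n}{4}\le |p|\le\frac{n}{3}$. Write $l=|p|$ and $t=|\Class_p(W)|$.

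\emph{Step 1: odd powers.} Since the first circuit $C(p,\cdot)$ appears in $\Gamma_l(W)$, we have $t\le m-l+1$. By Corollary~\ref{inj}, $|O_p(W)|\ge \frac{t-l}{2}$. These odd powers are counted in $\sc'(W)$ but are not squares.

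\emph{Step 2: wasted small circuits.} The circuits $C(p,i)$ with $l+t+1\le i\le m$ lie in $\sc(W)\setminus\sc'(W)$. This gives $|\sc(W)\setminus \sc'(W)|\ge m-l-t$.

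\emph{Step 3: large circuits.} Write $n=kl+r$ with $0\le r<l$. Since $\frac{n}{4}\le l\le\frac{n}{3}$, we have $k\in\{3,4\}$. As in Lemma~\ref{Case2}, $t\le (k-1)l$, since otherwise some conjugate $p'$ would satisfy $p'^{k+1}\in\fac([w])$. By hypothesis~1, at least $n-m$ Rauzy graphs have a large circuit in every basis. Subtracting all three contributions from the bound $\indep(\Gamma(W))\le 2n$ of Equation~\ref{EQ1} gives
\begin{equation*}
\sq([w])\le 2n-\frac{t-l}{2}-(m-t-l)-(n-m)=n+\frac{t+3l}{2}.
\end{equation*}

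\emph{Case analysis.} If $t\le n-2l$, this yields $\sq([w])\le n+\frac{n+l}{2}\le n+\frac{n+n/3}{2}=\frac{5n}{3}$, which is exactly the target and uses $l\le\frac{n}{3}$.

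If $k=4$ and $n-2l<t\le 3l$, then some conjugate $p'$ satisfies $p'^4\in\fac([w])$, provided $r>0$. Lemma~\ref{distinct} then gives $l$ further large graphs, and combining this with Step 1 as in Case~I gives
\begin{equation*}
\sq([w])\le 2n-\frac{t+l}{2}\le 2n-\frac{n-l}{2}\le 2n-\frac{3n/4}{2}<\frac{5n}{3}.
\end{equation*}
The degenerate case $r=0$ would make $w$ a power of a conjugate of $p$, which is excluded because $w$ is primitive.

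If $k=3$, then $t\le 2l$. When $t\le n-2l$ we are in the first case. The remaining window is $n-2l<t\le 2l$, where $l>\frac{n}{4}$.

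\emph{Main obstacle.} The remaining $k=3$ window is the hard case, because Lemma~\ref{distinct} requires $k\ge 4$. Here I would first try the crude bound directly: $t\le 2l$ gives $n+\frac{t+3l}{2}\le n+\frac{5l}{2}$, which is too weak. So I would instead use that $t\le m-l+1$ together with $m\le n$, combined with the structure $w'=p'^3p''$. I would adapt the argument of Lemma~\ref{distinct} to show that for $i$ in a range of length about $t-(n-2l)$ near $n$, the vertex $w'[1,i]$ lies on a unique elementary circuit of length at least $2l\ge\frac{n}{2}$. This would give extra large graphs compensating the excess $t-(n-2l)$ in the bound $n+\frac{t+3l}{2}$, bringing it back to $n+\frac{n+l}{2}\le\frac{5n}{3}$. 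Getting this compensation to match exactly is the step I expect to need the most care.
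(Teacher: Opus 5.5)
\textbf{There is a genuine gap.} The window you leave open is the only nontrivial case, and the tool you propose for it (extra large circuits in the style of Lemma~\ref{distinct}) is not needed.

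First, $l\ge\frac n4$ forces $k\le 4$, with $k=4$ only when $n=4l$, i.e.\ $r=0$. So your branch ``$k=4$, $r>0$'' is empty. (The inequality $2n-\frac{n-l}{2}\le 2n-\frac{3n}{8}$ you write there would in any case need $l\le\frac n4$.) Everything therefore happens for $k=3$.

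The loss comes from using Corollary~\ref{inj}. In the window $n-2l<t\le 2l$ you have $t>n-2l\ge l$. By the explicit description in the proof of Lemma~\ref{odd}, $\Class_p(W)$ then consists of the $l$ squares $q^2$ ($q\in[p]$) and $t-l$ cubes $q^3$. Hence $|O_p(W)|=t-l$ exactly, twice your lower bound. Plugging this into your own accounting gives
\[
\sq([w])\le 2n-(t-l)-(m-t-l)-(n-m)=n+2l\le\frac{5n}{3},
\]
and for $t<l$ simply $\sq([w])\le n+t+l<n+2l$. This is exactly the paper's argument. The odd powers cancel the $t$-dependence completely, so neither the split at $t=n-2l$ nor any additional large circuits are required.

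\textbf{A second, smaller problem} is your justification of $t\le (k-1)l=2l$ when $k=3$. The class $\Class_p(W)$ records powers occurring in $W=w^2$, not in $[w]$. A fourth power $q^4$ of length $4l>n$ can occur in $W$: for $w=(ababa)^3ab$ one has $n=17$, $l=5$, and $w^2$ begins with $(ababa)^4$. So $p'^{4}\in\fac([w])$ does not follow. The paper's proof dismisses $t>2|p|$ by claiming $q^4$ would be a conjugate of $w$, which is only valid when $4|p|=n$, so it contains the same slip.

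The repair uses the same idea as above. For $\frac n4\le l\le\frac n3$ and $w$ primitive, the only elements of $\Class_p(W)$ that can be squares of $[w]$ are the $q^2$:
\begin{itemize}
\item $q^3$ is an odd power;
\item $q^i$ with $i\ge 4$ has length $il>n$ unless $4l=n$;
\item if $4l=n$, then $q^4$ would be a conjugate of the primitive word $w$.
\end{itemize}
Hence at most $\min(t,l)$ elements of the class are squares of $[w]$, and for every $t$
\[
\sq([w])\le n+t+l-\bigl(t-\min(t,l)\bigr)=n+l+\min(t,l)\le n+2l\le\frac{5n}{3}.
\]
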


\begin{proof}
 Let us do an analogous proof as in the previous case. Let $|\Class_p(W)|=t$, then $t \leq m-|p|$. From the same observation as above, $\{C(p,l)| \;|p|+t+1 \leq l\leq m\} \subset \sc(W)\setminus\sc'(W).$
Thus,
$$\sq([w]) \leq |\sc'(W)|-|O_p(W)| \leq |\sc(W)|-(m-t-|p|)- |O_p(W)|.$$

If $t>2|p|$, then there is a word $q$ conjugate to $p$ such that $q^4 \in \fac(w^{\prime})$ for some $w^{\prime}\in [w]$. However, since $\frac{n}{4}\leq |q| \leq \frac{n}{3}$ from the hypothesis, then $q^4$ is a conjugate of $w$, contradicting the primitivity of $w$. 
If $|p| \leq t \leq 2|p|$, then $|O_p(W)|=t-|p|$. Thus,
$$\sq([w]) \leq |\sc(W)|-|O_p(W)| \leq |\sc(W)|-(m-t-|p|)- (t-|p|)\leq \frac{5n}{3}.$$
If $ t < |p|$, then
$$\sq([w]) \leq |\sc(W)|-|O_p(W)| \leq |\sc(W)|-(m-t-|p|)\leq n+2|p|\leq \frac{5n}{3}.$$
In all cases,
$$\sq([w])\leq \frac{5n}{3}.$$ \qed
\end{proof}

\begin{proof}[of the Main Theorem]
First, if $w$ is non-primitive, the theorem has been proved in Lemma~\ref{multi}. 

Now, let us suppose that $w$ is primitive. In this case, $C(w,n) \sqsubset \Gamma_n(W)$ is an elementary circuit. If $C(w,.)$ never splits, or splits at some $m < \frac{n}{2}$, then one can conclude using Lemma~\ref{Case1}. 

If $C(w,.)$ splits at some $m \geq \frac{n}{2}$ into more than $2$ elementary circuits, then, from Observation~\ref{obs1}, there exists a circuit $C \sqsubset \Gamma_m(W)$ such that $|C|\leq \frac{n}{3}$. Then one can conclude using Lemma~\ref{Case2}, Lemma~\ref{Case3}. 

If $C(w,.)$ splits at some $m \geq \frac{n}{2}$ into $2$ elementary circuits, then, from Observation~\ref{obs1}, there exist $C_1=C(q_1,m),C_2=C(q_2,m)$ such that $|C_1|\geq \frac{n}{2}, \;|C_2|\leq \frac{n}{2}$. 

If $C(q_1,.)$ never splits, or splits at some $m' < \frac{n}{2}$, then, from Observation~\ref{obs2}, $C(q_2,.)$ cannot split before $\frac{n}{2}$, the two circuits $C(q_1,i),C(q_2,i)$ form a basis of $\Gamma_i(W)$ for $\frac{n}{2}\leq i \leq m$. Thus, one can conclude using Lemma~\ref{Case1}. 

If $C(q_1,.)$ splits at some $m' \geq \frac{n}{2}$, then, from the same argument as above, $C(q_1,i),C(q_2,i)$ form a basis of $\Gamma_i(W)$ for $m'\leq i \leq m$ with $|C(q_1,i)|\geq \frac{n}{2}$. Moreover, from Observation~\ref{obs1}, there exists a circuit $C \sqsubset \Gamma_{m'}(W)$ such that $|C|\leq \frac{n}{3}$. Thus, one can conclude using Lemma~\ref{Case2}, Lemma~\ref{Case3}. \qed

\end{proof}

\section{Conclusion}

In this note, we give a tighter upper bound of the number of distinct squares in a circular word of length $n$, which was previously $1.8n$. However, it is mentioned in~\cite{AmitG17} that, from computation, the sharp upper bound seems to be $1.5n$. Thus, the current upper bound may also be improved.

\bibliographystyle{splncs03}
\bibliography{biblio}

\end{document}